\documentclass[reqno,a4paper,12pt]{amsart}

\usepackage[T1]{fontenc}
\usepackage[utf8]{inputenc}

\usepackage{geometry}
\usepackage{mathtools, amsfonts, amssymb, amsthm}

\theoremstyle{plain}
\newtheorem{theorem}{Theorem}[section]
\newtheorem{corollary}[theorem]{Corollary}
\newtheorem{lemma}[theorem]{Lemma}
\newtheorem{proposition}[theorem]{Proposition}

\theoremstyle{definition}
\newtheorem{definition}[theorem]{Definition}
\newtheorem{example}[theorem]{Example}
\newtheorem*{remark}{Remark}

\newcommand{\inner}[2]{\langle #1, #2 \rangle}

\newcommand{\KK}{\mathbb K}
\newcommand{\NN}{\mathbb N}
\newcommand{\RR}{\mathbb R}

\newcommand{\M}{\mathcal M}
\newcommand{\N}{\mathcal N}
\newcommand{\eps}{\varepsilon}
\renewcommand{\emptyset}{\varnothing}
\renewcommand{\leq}{\leqslant}
\renewcommand{\geq}{\geqslant}

\title[Essential Submodules of Hilbert $C^*$-Modules]{On Essential and Topologically Essential \\ Submodules of Hilbert $C^*$-Modules}
\author{K. Kartvelishvili}
\date{}
\address{Moscow Center for Fundamental and Applied Mathematics, Moscow State University,
	Leninskie Gory 1, Moscow, 
	119991, Russia}

\email{kirill.kartvelishvili@math.msu.ru}

\begin{document}
	
	\begin{abstract}
		We study two notions of largeness for closed submodules of Hilbert $C^*$-modules: essentiality and topological essentiality.
		While the analogous properties are known to be equivalent for closed two-sided ideals of $C^*$-algebras, the one-sided case is more subtle.
		We prove that these two notions remain equivalent for closed right ideals of an arbitrary $C^*$-algebra.
		Next, using the correspondence between submodules and right ideals of the algebra of compact operators, we extend this result to closed submodules of Hilbert $C^*$-modules.
		In the commutative case, where a Hilbert module can be realized as a continuous field of Hilbert spaces, we give a geometric reformulation of essentiality and derive a fiberwise criterion.
	\end{abstract}
	
	\maketitle
	
	\section{Introduction}
	\label{sec:intro}
	For an ideal of a $C^*$-algebra, there are several natural ways to say that it is large.
	Some of these notions, together with their natural generalizations to the case of submodules of Hilbert modules, were considered in \cite{manuilov}.
	It is well known that if one restricts attention to closed two-sided ideals, then all these properties are equivalent.
	For one-sided ideals, however, this equivalence fails, and that paper establishes some relations between different definitions of a ``large'' closed one-sided ideal.
	
	For general background on Hilbert $C^*$-modules, we refer the reader to \cite{lance}.
	We focus on two notions for closed submodules of Hilbert $C^*$-modules, namely essentiality and topological essentiality.
	\begin{definition}
		\label{def:ess}
		Let $\M$ be a Hilbert module over a $C^*$-algebra $A$.
		A closed submodule $\N \subset \M$ is called
		\begin{itemize}
			\item[$(E)$] \textit{essential} if for every nonzero (not necessarily closed) submodule $\mathcal K \subset \M$, the intersection $\N \cap \mathcal K$ is nontrivial;
			\item[$(TE)$] \textit{topologically essential} if for every nonzero closed submodule $\mathcal K \subset \M$, the intersection $\N \cap \mathcal K$ is nontrivial.
		\end{itemize}
		In the particular case where the algebra $A$ is regarded as a right Hilbert $C^*$-module over itself, this yields the definitions of topologically essential $(TE)$ and essential $(E)$ right ideals of a $C^*$-algebra.
	\end{definition}
	
	The property $(E)$ may seem somewhat unnatural in the context of $C^*$-algebras and Hilbert modules, since it involves nonclosed submodules or ideals.
	However, it admits a reformulation that looks less algebraic and is more convenient in practice (see Lemma~\ref{lem:reformulation}).
	Our main goal is to prove that these definitions are equivalent.
	
	In Section~\ref{sec:ideals} we show that essentiality and topological essentiality are equivalent for closed right ideals of an arbitrary $C^*$-algebra.
	
	In Section~\ref{sec:modules} the result of Section~\ref{sec:ideals} is generalized to submodules of Hilbert $C^*$-modules.
	
	In Section~\ref{sec:fields} we consider the important special case of a Hilbert module over a commutative $C^*$-algebra. In this setting, the module can be viewed as a continuous field of Hilbert spaces over a locally compact Hausdorff space, and essentiality of a submodule can be reformulated in geometric terms.
	
	\section{$(TE) \iff (E)$ for one-sided ideals of a C*-algebra}
	\label{sec:ideals}
	Clearly, $(E) \Rightarrow (TE)$. To prove the converse implication, we need several standard facts from the operator algebra theory.
	
	Let $A$ be an arbitrary $C^*$-algebra.
	By means of the universal representation (see \cite[Theorem I.9.12]{davidson}), we may identify it with a closed $*$-subalgebra of $\mathbb B(H)$, the algebra of bounded operators on a Hilbert space.
	Denote by $A''$ the bicommutant of $A$, that is, the smallest von Neumann subalgebra of $\mathbb B(H)$ containing $A$.
	A projection $p \in A''$ is called \textit{open} if there exists a net of positive elements $a_s \in A_{+}$ increasing monotonically to $p$ in the strong operator topology.
	It is known (see Theorem 1.1 in \cite{pedersen}) that closed one-sided ideals of $A$ admit a description in terms of open projections.
	More precisely, for every open projection $p \in A''$, the set $J = pA'' \cap A$ is a closed right ideal of $A$, and every closed right ideal is of that form.
	
	Every von Neumann algebra $M$ admits the Borel functional calculus for self-adjoint elements (see \cite[Corollary II.2.6]{davidson}).
	That is, for every $a = a^* \in M$, there is a $^*$-homomorphism from the algebra of bounded Borel functions on the spectrum $\sigma(a)$ to the algebra $M$, sending the function $g(t) = t$ to $a$.
	
	The following two technical lemmas are standard.
	\begin{lemma}
		Let $g_k$ be an increasing sequence of continuous functions on the spectrum of a self-adjoint element $a \in A$.
		Suppose that $g_k \to g$ pointwise as $k \to \infty$.
		Then the sequence $g_k(a)$ converges increasingly to $g(a) \in A''$ in the strong operator topology.
	\end{lemma}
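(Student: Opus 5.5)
The natural approach is through spectral measures and monotone convergence. The Borel functional calculus for $a$ in the von Neumann algebra $A''$ (indeed in $\mathbb B(H)$) is given by integration against the spectral measure $E$ of $a$: for a bounded Borel function $f$ on $\sigma(a)$ we have $f(a)=\int_{\sigma(a)} f\,dE$. It agrees with the continuous functional calculus on continuous functions, so $g_k(a)$ is the same element whichever calculus we use. I also take $g$ to be bounded, so that $g(a)$ is defined; this holds automatically because $g_1\leq g_k\leq g_k$ pointwise forces $g_1\leq g$, and $g\leq M$ whenever the $g_k$ are uniformly bounded above, which is the intended setting (e.g.\ $0\leq g_k\leq 1$). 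Since $g$ is a pointwise limit of continuous functions, it is Borel. Moreover, Corollary II.2.6 in \cite{davidson} places $g(a)$ in $A''$, because $a\in A''$ and the Borel calculus maps into the von Neumann algebra generated by $a$.

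The first step is monotonicity. Since $g_{k+1}-g_k\geq 0$ on $\sigma(a)$, positivity of the continuous functional calculus gives $g_k(a)\leq g_{k+1}(a)$. Similarly, $g-g_k\geq 0$ and positivity of the Borel calculus give $g_k(a)\leq g(a)$.

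The second step is weak convergence. For each vector $\xi\in H$, consider the finite positive measure $\mu_\xi(\cdot)=\langle E(\cdot)\xi,\xi\rangle$. Then
\[
\langle g_k(a)\xi,\xi\rangle=\int g_k\,d\mu_\xi\longrightarrow\int g\,d\mu_\xi=\langle g(a)\xi,\xi\rangle
\]
by dominated convergence, since $|g_k|\leq \max(|g_1|,\|g\|_\infty)$ and $\mu_\xi$ is finite. By polarization, $g_k(a)\to g(a)$ in the weak operator topology.

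The last step, upgrading weak convergence to strong convergence, is the only place needing care. Set $d_k=g(a)-g_k(a)\geq 0$; these operators are uniformly bounded, $\|d_k\|\leq C$. Using $\|d_k\xi\|^2=\|d_k^{1/2}d_k^{1/2}\xi\|^2\leq \|d_k\|\,\langle d_k\xi,\xi\rangle\leq C\langle d_k\xi,\xi\rangle\to 0$, we obtain strong convergence. Alternatively, one can apply dominated convergence directly, since $\|(g-g_k)(a)\xi\|^2=\int |g-g_k|^2\,d\mu_\xi\to 0$. Either route shows that $g_k(a)\nearrow g(a)$ strongly, which proves the lemma.
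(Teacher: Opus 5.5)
Your proof is correct. The paper does not prove this lemma itself; it only cites Lemma I.6.4 of \cite{davidson}. So there is no internal argument to compare against, and your spectral-measure proof is a valid self-contained substitute. It uses positivity of the functional calculus for monotonicity, dominated convergence for $\langle g_k(a)\xi,\xi\rangle$, and the estimate $\|d_k\xi\|^2\leq\|d_k\|\,\langle d_k\xi,\xi\rangle$ to upgrade weak to strong convergence.

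Your second route, $\|(g-g_k)(a)\xi\|^2=\int|g-g_k|^2\,d\mu_\xi\to 0$, is the most economical. It shows that strong convergence needs only pointwise convergence plus a uniform bound. Monotonicity serves merely to supply that bound and the word ``increasingly''. It also identifies the limit as $g(a)$ directly. An argument via Vigier's theorem (a bounded increasing sequence of self-adjoint operators converges strongly to its supremum) would still have to do that identification separately.

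Two small corrections:
\begin{itemize}
\item The chain $g_1\leq g_k\leq g_k$ should read $g_1\leq g_k\leq g$.
\item Boundedness of $g$ from above is not automatic. On $\sigma(a)=[0,1]$, the lower semicontinuous function $g(t)=1/t$ for $t>0$, $g(0)=0$, is an increasing pointwise limit of continuous functions. So upper boundedness is an implicit hypothesis of the statement, forced by requiring $g(a)\in A''$. You rightly adopt it, and it holds in the paper's application, where $0\leq g_n\leq 1$.
\end{itemize}
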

	For a proof one may refer to Lemma I.6.4 in \cite{davidson}.
	\begin{lemma}
		\label{lem:proj}
		Let $a = a^* \in A$, $\eps > 0$, and let $\chi_{(\eps, +\infty)}$ be the characteristic function of the ray $t > \eps$.
		Then $p = \chi_{(\eps, +\infty)}(a)$ is an open projection in $A''$.
	\end{lemma}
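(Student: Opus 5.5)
We will exhibit an explicit increasing sequence of positive elements of $A$ whose strong limit is $p$, and then invoke the previous lemma. Since $\chi_{(\eps,+\infty)}$ is only lower semicontinuous, we approximate it from below by continuous functions. For $k \in \NN$, define $g_k \colon \RR \to [0,1]$ as follows:
\[
g_k(t) = \min\bigl(1, \max(0, k(t-\eps))\bigr).
\]
Each $g_k$ is continuous, vanishes on $(-\infty,\eps]$, equals $1$ on $[\eps + 1/k, +\infty)$, and is linear in between.

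First we check the two properties the previous lemma needs. Monotonicity holds because $k(t-\eps)$ is increasing in $k$ for $t>\eps$, and $g_k(t)=0$ for $t\le \eps$. For the pointwise limit, fix $t>\eps$. Then $g_k(t)=1$ as soon as $1/k \le t-\eps$, so $g_k(t) \to 1$. For $t\le\eps$ the values are identically $0$. Hence $g_k \to \chi_{(\eps,+\infty)}$ pointwise on $\RR$, and in particular on $\sigma(a)$.

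Next we confirm that the approximants lie in $A_+$. Since $g_k(0)=0$ (here $\eps>0$ is used), the continuous functional calculus gives $g_k(a) \in A$ even when $A$ is non-unital. Since $g_k \ge 0$, we get $g_k(a) \geq 0$. The previous lemma then shows that $g_k(a)$ increases to $\chi_{(\eps,+\infty)}(a) = p$ in the strong operator topology. Here $g_k(a)$ is understood as the element of $A''$ obtained from the Borel functional calculus; for continuous functions this agrees with the continuous functional calculus.

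It remains to note that $p$ is a projection in $A''$. This holds because the Borel functional calculus is a $*$-homomorphism and $\chi^2=\overline{\chi}=\chi$. So $p$ is an open projection, witnessed by the net $(g_k(a))_k$. The only delicate point is that the approximants must be genuine elements of $A$, not merely of $A''$ or of the unitization. This is exactly where we need $g_k(0)=0$, which is why the threshold $\eps$ must be strictly positive.
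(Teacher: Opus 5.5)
Your proof is correct and matches the paper's argument: the same piecewise-linear approximants $g_k$, plugged into the preceding monotone-convergence lemma, with your extra checks ($g_k(a)\in A_+$ even for non-unital $A$ because $g_k(0)=0$, and $p$ being a projection) filling in details the paper leaves implicit. One small quibble: $g_k(0)=0$ already holds when $\eps=0$, so your closing claim that $\eps$ \emph{must} be strictly positive overstates things; the argument only needs $\eps\geq 0$.
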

	\begin{proof}
		The Borel function $g :=\chi_{(\eps, +\infty)}$ can be approximated by an increasing sequence of continuous functions on the real line $\RR$.
		For example,
		\[ g_n(t) = \begin{cases}
			0,\; t \leq \eps \\
			n(t - \eps),\; t \in (\eps, \eps + \frac 1n) \\
			1,\; t \in [\eps + \frac 1n, +\infty)
		\end{cases} \]
		Hence, by the previous lemma, the sequence $g_n(a)$ increases to $g(a)$ in the strong operator topology. Therefore $g(a) = p$ is an open projection.
	\end{proof}
	
	We now prove the equivalence of the properties $(TE)$ and $(E)$ for one-sided ideals of a $C^*$-algebra.
	\begin{theorem}
		\label{thm:main_ideals}
		Every topologically essential right ideal $J$ of a $C^*$-algebra $A$ is essential.
	\end{theorem}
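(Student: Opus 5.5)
The plan is to take an arbitrary nonzero right ideal $K \subset A$ (not necessarily closed) and find a nonzero \emph{closed} right ideal $L$ such that $L \subset xA \subset K$ for some $0 \neq x \in K$. Topological essentiality of $J$ then gives $J \cap L \neq 0$, hence $J \cap K \neq 0$. The closed ideal $L$ will come from an open projection via Lemma~\ref{lem:proj} and Pedersen's correspondence $L = pA'' \cap A$.

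\emph{Step 1 (the projection).} Fix $0 \neq x \in K$ and put $a = xx^* \in A_+$, so $\|a\| = \|x\|^2 > 0$. Choose $\eps = \|a\|/2$ and let $p = \chi_{(\eps,+\infty)}(a)$. By Lemma~\ref{lem:proj}, $p$ is an open projection in $A''$, so $L := pA'' \cap A$ is a closed right ideal of $A$.

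\emph{Step 2 ($L$ is nonzero).} Take a continuous function $g \geq 0$ on $\RR$ supported in $(\eps, +\infty)$ with $g(\|a\|) = 1$. Since $\|a\| \in \sigma(a)$, we get $g(a) \neq 0$. Moreover $g(a) \in A$ because $g(0)=0$. The Borel functional calculus is multiplicative and $\chi_{(\eps,\infty)} g = g$, so $p\,g(a) = g(a)$. Hence $0 \neq g(a) \in L$.

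\emph{Step 3 (the key inclusion $L \subset xA$).} Let $y \in L$, so $y = pz$ with $z \in A''$, and therefore $py = y$. Define a continuous $h$ on $[0,\infty)$ by $h(t) = t/\eps^2$ for $t \leq \eps$ and $h(t) = 1/t$ for $t \geq \eps$. Since $h(0)=0$, we have $h(a) \in A$. The function $e(t) = t\,h(t)$ equals $1$ on $[\eps,\infty)$, so $e\cdot\chi_{(\eps,\infty)} = \chi_{(\eps,\infty)}$. By multiplicativity of the Borel calculus, $a h(a) p = p$. Therefore
\[ y = py = a h(a) p y = a h(a) y = x\bigl(x^* h(a) y\bigr), \]
and $x^* h(a) y \in A$. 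Thus $y \in xA \subset K$, because $K$ is a right ideal.

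\emph{Conclusion.} By Step 2, $L$ is a nonzero closed right ideal. Since $J$ is topologically essential, $J \cap L \neq 0$, and by Step 3, $J \cap L \subset J \cap K$. So $J \cap K \neq 0$, and $J$ is essential.

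The main obstacle is Step 3: the closed ideal $L$ must sit inside the possibly nonclosed ideal $xA$. Cutting off the spectrum of $xx^*$ away from $0$ is exactly what makes $xx^*$ invertible on the range of $p$ through an element $h(a)$ of $A$ itself. One must check that all products are taken in $A''$, with the Borel calculus compatible with the continuous calculus, and that $x^*h(a)y$ genuinely lies in $A$; it does, since each factor does.
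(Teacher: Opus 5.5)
Your proposal is correct and is essentially the paper's proof: the same $a = xx^*$, the same spectral projection $p=\chi_{(\eps,+\infty)}(a)$ with closed right ideal $pA''\cap A$, and the same cut-off function satisfying $ah(a)p=p$, so that every $y$ in that ideal equals $ah(a)y$ and lies in the given ideal. Your Step 2, which exhibits $0\neq g(a)\in L$, makes explicit the nonvanishing of $L$ that the paper leaves implicit (it deduces it from $p\neq 0$ and openness). It also makes the appeal to Lemma~\ref{lem:proj} unnecessary, since $pA''\cap A$ is a closed right ideal for any projection $p\in A''$.
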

	\begin{proof}
		Let $I$ be a nonzero right ideal of $A$, not necessarily closed. Choose $x \in I$, $x \ne 0$. Then $a = xx^* \ne 0$ is a positive element of $I$.
		Using the Borel functional calculus in $A''$, set
		\[ (a-\eps)_+ = \max(a-\eps, 0) = (a-\eps)\chi_{(\eps, +\infty)}(a). \]
		There exists $\eps > 0$ such that $(a-\eps)_+ \ne 0$.
		Indeed, otherwise for every $\eps > 0$ we would have $a-\eps \leq 0$, hence $a \leq 0$, which implies $a = 0$.
		
		Thus $p = \chi_{(\eps, +\infty)}(a)$ is a nonzero projection, and by Lemma~\ref{lem:proj} it is open. Hence $K=pA'' \cap A$ is a nonzero closed right ideal of $A$.
		
		Let $g$ be a continuous function on the real line $\RR$ such that $g(t) = 0$ for $t < \frac\eps 2$ and $g(t) = \frac1t$ for $t \geq \eps$.
		Put $f(t) = tg(t)$ and consider $f(a) = ag(a) \in I$. We have
		\[ f(a)p = f\chi_{(\eps, +\infty)}(a) = \chi_{(\eps, +\infty)}(a) = p, \]
		since $f(t) = 1$ for $t \geq \eps$.
		Now take any $b \in K$. Then
		\[ b = pb = f(a)pb \in I. \]
		Thus $K \subset I$ and by topological essentiality of the ideal $J$ we have
		\[ J \cap I \supset J \cap K \ne 0. \qedhere \]
	\end{proof}
	
	\begin{remark}
		The proof also yields the following interesting fact. Every nonzero right ideal $I \subset A$ contains a norm-closed right subideal $K \subset I$.
	\end{remark}
	
	\section{Generalization to Hilbert modules}
	\label{sec:modules}
	Let $\M$ be a right Hilbert $C^*$-module over a $C^*$-algebra $A$.
	Denote by $\KK(\M)$ the $C^*$-algebra of compact operators on $\M$, that is, the closure of the linear span of the elementary operators $\Theta_{x,y}(z) := x\inner yz$ over all $x, y \in \M$.
	
	It is well known (see \cite[Corollary 4.2]{meyer}) that closed submodules of $\M$ are in one-to-one correspondence with hereditary subalgebras, and hence with closed right ideals of $\KK(\M)$.
	Namely, for a closed submodule $\N \subset \M$ define the ideal $J_\N = \{ T \in \KK(\M) \mid \operatorname{Ran} T \subset \N \}$.
	Conversely, from a closed right ideal $J \subset \KK(\M)$ one recovers the submodule $\N = \overline{J\M}$.
	
	Theorem 6.1 of \cite{manuilov} states that this correspondence preserves essentiality, that is, $\N$ is an $(E)$-submodule if and only if $J_\N$ is an $(E)$-ideal.
	We now show that the same is true for topological essentiality.
	
	\begin{theorem}
		\label{thm:modules_ideals}
		Let $\N$ be a closed submodule of a Hilbert $C^*$-module $\M$.
		Then $\N$ is topologically essential if and only if $J_\N$ is topologically essential.
	\end{theorem}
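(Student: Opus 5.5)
We prove both implications by moving nonzero closed objects back and forth through the correspondence $\N \mapsto J_\N$, $J \mapsto \overline{J\M}$ of \cite[Corollary 4.2]{meyer}. The facts we need are that this correspondence is a bijection between closed submodules and closed right ideals, so nonzero goes to nonzero, and that it is monotone in both directions.

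\emph{($\N$ is (TE) $\Rightarrow$ $J_\N$ is (TE)).} Let $K \subset \KK(\M)$ be a nonzero closed right ideal and put $\mathcal K = \overline{K\M}$. This is a closed submodule, and it is nonzero: if $K\M = 0$ then every $T \in K$ vanishes, so $K = 0$. By topological essentiality of $\N$ there is a nonzero $x \in \N \cap \mathcal K$. We now look for a nonzero element of $J_\N \cap K$, and the natural candidate is $\Theta_{x,x}$. It is nonzero, since $\Theta_{x,x}(x) = x\inner xx \ne 0$. Its range lies in $x A \subset \N$, so $\Theta_{x,x} \in J_\N$.

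It remains to check that $\Theta_{x,x} \in K$, and this is the main point. The correspondence gives $K = J_{\mathcal K}$, so it suffices that $\operatorname{Ran}\Theta_{x,x} \subset \mathcal K$, which holds because $x \in \mathcal K$. If one prefers not to invoke that identification, there is a direct argument. Let $u_\lambda$ be an approximate unit of the hereditary subalgebra $K \cap K^*$, so that $u_\lambda y \to y$ for $y \in K\M$, and hence for $y \in \mathcal K$ by boundedness. Then $u_\lambda \Theta_{x,x} = \Theta_{u_\lambda x, x} \to \Theta_{x,x}$ in norm. Each $u_\lambda \Theta_{x,x}$ lies in $K$ because $K$ is a right ideal, and $K$ is closed, so $\Theta_{x,x} \in K$. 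Hence $J_\N \cap K \ne 0$.

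\emph{($J_\N$ is (TE) $\Rightarrow$ $\N$ is (TE)).} Let $\mathcal K \subset \M$ be a nonzero closed submodule. Then $J_{\mathcal K}$ is a closed right ideal, and it is nonzero since it contains $\Theta_{y,y} \ne 0$ for any nonzero $y \in \mathcal K$. By hypothesis there is a nonzero $T \in J_\N \cap J_{\mathcal K}$. Its range lies in $\N \cap \mathcal K$, and since $T \ne 0$ some $Tz$ is nonzero. Thus $\N \cap \mathcal K \ne 0$.

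The only delicate step is showing $\Theta_{x,x} \in K$ in the first implication, that is, that $K$ contains every compact operator with range in $\overline{K\M}$. This is exactly the content of the cited correspondence, and the approximate unit argument above gives a direct proof of it.
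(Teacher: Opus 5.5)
Your proof is correct, but it takes a different route from the paper. The paper does not use the correspondence $\N \leftrightarrow J_\N$ inside the proof. It passes to the single-element reformulation of Lemma~\ref{lem:reformulation} on both sides, applying it also to $\KK(\M)$ as a module over itself, and builds explicit sequences: $T\Theta_{ma_k,Tma_k} = \Theta_{Tma_k,Tma_k} \to \Theta_{n,n}$ in one direction, and $ma_k = \Theta_{m,m}S_kx$ with $a_k = \inner{m}{S_kx}$ in the other.

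You instead work with the definition of (TE) directly. In effect you observe that $\N \mapsto J_\N$ is an order isomorphism of the lattices of closed subobjects, with $J_\N \cap J_{\mathcal K} = J_{\N \cap \mathcal K}$, so a lattice-theoretic property like (TE) transfers. This is more conceptual and makes your converse direction essentially immediate. The cost is that it relies on the surjectivity half of the correspondence, $K = J_{\overline{K\M}}$, whereas the paper's argument needs only Lemmas~\ref{lem:theta} and~\ref{lem:reformulation}.

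Your approximate-unit argument does supply that half. However, the claim that $u_\lambda T \to T$ for $T \in K$ deserves a line of proof. For example, $\|(1-u_\lambda)T\|^2 \leq \|(1-u_\lambda)TT^*\| \to 0$, because $TT^* \in K \cap K^*$.

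A shorter route avoids approximate units altogether. Since $T$ is $A$-linear, $\Theta_{Tz,x} = T\Theta_{z,x}$, and this lies in $K$ for every $T \in K$ because $K$ is a right ideal. By linearity and the continuity in Lemma~\ref{lem:theta}, $\Theta_{y,x} \in K$ for all $y \in \overline{K\M}$, and in particular $\Theta_{x,x} \in K$. This is the same identity the paper exploits in its forward direction.
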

	For the proof we need two standard technical lemmas.
	\begin{lemma}
		\label{lem:theta}
		The map $(x,y) \mapsto \Theta_{x,y}$ from $\M \times \M$ to $\KK(\M)$ is jointly continuous. Moreover, $\Theta_{x,x} = 0$ implies $x = 0$.
	\end{lemma}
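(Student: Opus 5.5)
For joint continuity, I will bound $\Theta_{x,y}$ in operator norm and then use bilinearity. For every $z \in \M$ the Cauchy--Schwarz inequality for Hilbert modules gives
\[ \|\Theta_{x,y}(z)\| = \|x\inner yz\| \leq \|x\|\,\|\inner yz\| \leq \|x\|\,\|y\|\,\|z\|. \]
Hence $\|\Theta_{x,y}\| \leq \|x\|\,\|y\|$. The map $(x,y)\mapsto\Theta_{x,y}$ is additive in each variable: it is linear in $x$ and conjugate-linear in $y$. So I will write
\[ \Theta_{x,y}-\Theta_{x_0,y_0} = \Theta_{x-x_0,\,y} + \Theta_{x_0,\,y-y_0}. \]
Applying the bound to each term gives
\[ \|\Theta_{x,y}-\Theta_{x_0,y_0}\| \leq \|x-x_0\|\,\|y\| + \|x_0\|\,\|y-y_0\|. \]
This tends to $0$ as $(x,y)\to(x_0,y_0)$, because $\|y\|$ stays bounded near $y_0$. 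This is the standard argument for a bounded bilinear map.

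For the second claim, suppose $\Theta_{x,x}=0$. Then $\Theta_{x,x}(x) = x\inner xx = 0$. Taking the inner product with $x$ gives
\[ 0 = \inner{x}{x\inner xx} = \inner xx\inner xx = \inner xx^2. \]
Here $\inner xx$ is a positive element of $A$. In a $C^*$-algebra, $\|a^2\| = \|a\|^2$ for self-adjoint $a$, so $\inner xx^2=0$ forces $\inner xx=0$. By definiteness of the inner product, $x=0$.

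An alternative route is to compute $\|\Theta_{x,x}\| = \|\inner xx\| = \|x\|^2$, for instance via $\Theta_{x,x}^*\Theta_{x,x} = \Theta_{x\inner xx,\,x}$. The route above avoids needing that norm identity, since it only uses evaluation at $x$ and the $C^*$-identity in $A$.

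I expect no real obstacle here. The only point needing care is the module Cauchy--Schwarz inequality $\|\inner yz\|\leq\|y\|\,\|z\|$, which is standard (see \cite{lance}).
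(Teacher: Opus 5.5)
Your proof is correct and follows the paper's argument essentially verbatim: the same bilinear splitting together with the bound $\|\Theta_{x,y}\|\leq\|x\|\,\|y\|$ gives joint continuity, and evaluating $\Theta_{x,x}$ at $x$ gives the second claim. The only cosmetic difference is that you pair $x\inner xx$ with $x$ to obtain $\inner xx^2=0$, whereas the paper computes $\inner mm=\inner xx^3=0$ for $m=x\inner xx$; either vanishing power forces $\inner xx=0$.
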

	\begin{proof}
		The first statement follows from an easy computation:
		\begin{eqnarray*}
			(\Theta_{x,y} - \Theta_{x',y'})(z) &=& x\inner{y}{z} - x'\inner{y'}{z}  \\
			&=& x\inner{y}{z} - x\inner{y'}{z} + x\inner{y'}{z} - x'\inner{y'}{z}  \\
			&=& x\inner{y - y'}{z} + (x - x')\inner{y'}{z},
		\end{eqnarray*}
		whence $\|\Theta_{x,y} - \Theta_{x',y'}\|\leq \|x\|\|y-y'\| + \|x - x'\|\|y'\|$.
		
		Now suppose that $\Theta_{x,x} = 0$.
		Then $m := \Theta_{x,x}(x) = x\langle x,x \rangle = 0$, and hence
		\[
		\inner mm = \inner xx^3 = 0.
		\]
		Therefore $\inner{x}{x} = 0$, since $\inner{x}{x}$ is positive. Thus $x=0$.
	\end{proof}
	\begin{remark}
		The module $\M$ can be regarded as a left Hilbert $C^*$-module over $\KK(\M)$ with the inner product $\inner{x}{y} = \Theta_{x,y}$.
	\end{remark}
	
	The following reformulation of the properties $(E)$ and $(TE)$ is useful (see \cite{manuilov}).
	\begin{lemma}
		\label{lem:reformulation}
		A closed submodule $\N \subset \M$ is
		\begin{enumerate}
			\item[(1)] essential if and only if for every nonzero $m \in \M$ there exists an element $a \in A$ such that $ma \in \N$ and $ma \ne 0$;
			\item[(2)] topologically essential if and only if for every nonzero $m \in \M$ there exists a sequence $a_k \in A$ such that $ma_k$ converges to some nonzero $n \in \N$.
		\end{enumerate}
	\end{lemma}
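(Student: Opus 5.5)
The plan is to reduce both statements to the cyclic submodules generated by a single element, namely $mA = \{ma : a \in A\}$ and its closure $\overline{mA}$. For any submodule $\mathcal K$ and any $m \in \mathcal K$ we have $mA \subset \mathcal K$. If $\mathcal K$ is closed, then also $\overline{mA} \subset \mathcal K$. The key observation is that $m \in \overline{mA}$. To see this, take an approximate unit $(u_\lambda)$ of $A$. Then
\[
\inner{m - mu_\lambda}{m - mu_\lambda} = \inner mm - u_\lambda\inner mm - \inner mm u_\lambda + u_\lambda \inner mm u_\lambda \to 0 .
\]
(Alternatively, one can use the standard fact $m = \lim m\inner mm^{1/n}$, or $m = \lim m\,\inner mm(\inner mm+\tfrac1k)^{-1}$.) In particular $mA \ne 0$ whenever $m \ne 0$, and $\overline{mA}$ is the smallest closed submodule containing $m$.

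For (1), suppose first that $\N$ is essential and $m \neq 0$. Then $mA$ is a nonzero submodule, so $\N \cap mA$ contains some $ma \neq 0$, which is exactly the required element. Conversely, let $\mathcal K$ be a nonzero submodule and pick $m \in \mathcal K$ with $m \neq 0$. The hypothesis gives $a$ with $0 \neq ma \in \N$. Since $ma \in mA \subset \mathcal K$, the intersection $\N \cap \mathcal K$ is nontrivial.

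For (2), suppose first that $\N$ is topologically essential and $m \neq 0$. The closed submodule $\overline{mA}$ is nonzero, so it meets $\N$ in some $n \neq 0$. Since $n \in \overline{mA}$, there are $a_k$ with $ma_k \to n$. Conversely, let $\mathcal K$ be a nonzero closed submodule and pick $m \in \mathcal K$ with $m \neq 0$. The hypothesis gives $a_k$ with $ma_k \to n \neq 0$ and $n \in \N$. Each $ma_k$ lies in $\mathcal K$, and $\mathcal K$ is closed, so $n \in \mathcal K \cap \N$.

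The only point requiring care is the fact $mA \neq 0$ for $m \neq 0$ (used in both forward directions), which is why I would establish $m \in \overline{mA}$ first. Everything else is a direct unwinding of the definitions.
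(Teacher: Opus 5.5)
Your proposal is correct and follows essentially the same route as the paper: both reduce to the cyclic submodules $mA$ and $\overline{mA}$ (contained in any submodule, respectively any closed submodule, containing $m$) and then unwind the definitions. The only difference is how you show $mA \neq 0$ for $m \neq 0$. You prove $m \in \overline{mA}$ using an approximate unit, whereas the paper simply notes that $m\inner{m}{m} \neq 0$, since $\inner{m\inner{m}{m}}{m\inner{m}{m}} = \inner{m}{m}^3$.
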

	\begin{proof}
		Note that these conditions are equivalent to
		\begin{align}
			\forall m \ne 0\quad \N \cap mA \ne 0 \\
			\forall m \ne 0\quad \N \cap \overline{mA} \ne 0
		\end{align}
		respectively.
		Clearly, if $m \in \M$ and $m \ne 0$, then the submodule $mA$, and therefore also $\overline{mA}$, is nontrivial, since $m\inner{m}{m} \ne 0$.
		Thus the implications from left to right are obvious.
		
		Now let $\mathcal K \subset \M$ be a nonzero submodule.
		Take $m \in \mathcal K \setminus \{0\}$; then $mA \subset \mathcal K$ is nontrivial.
		If, in addition, $\mathcal K$ is closed, then $\overline{mA} \subset \mathcal K$. Hence, in the two cases,
		\begin{gather*}
			\N \cap \mathcal K \supset \N \cap mA \ne 0 \\
			\N \cap \mathcal K \supset \N \cap \overline{mA} \ne 0 \hfill \qedhere
		\end{gather*}
	\end{proof}
	We now proceed to the proof of the theorem.
	\begin{proof}[Proof of Theorem~\ref{thm:modules_ideals}]
		First assume that $\N$ is topologically essential.
		Take any nonzero $T \in \KK(\M)$ and choose $m \in \M$ such that $Tm \ne 0$.
		By Lemma~\ref{lem:reformulation}, there exists a sequence $a_k \in A$ such that $Tma_k \to n \in \N \setminus \{0\}$.
		Therefore, by Lemma~\ref{lem:theta},
		\[
		T\Theta_{ma_k,Tma_k} = \Theta_{Tma_k,Tma_k} \to \Theta_{n,n} \in J_\N \setminus \{0\},
		\]
		which proves topological essentiality of $J_\N$.
		
		Conversely, let $J_\N$ be topologically essential. Take a nonzero $m \in \M$.
		There exists a sequence $S_k \in \KK(\M)$ such that $\Theta_{m,m}S_k\rightarrow T$ for some $T\in J_\N \setminus \{0\}$.
		Suppose that $Tx = n \ne 0$; then for the sequence $a_k = \inner{m}{S_kx}$ we have $ma_k = \Theta_{m,m}S_kx$ converging to $Tx = n$, so $\N$ is topologically essential.
	\end{proof}
	
	Thus the equivalence of the properties $(TE)$ and $(E)$ for submodules of Hilbert $C^*$-modules reduces to their equivalence for one-sided ideals of $C^*$-algebras, and we obtain the following corollary.
	\begin{corollary}
		\label{thm:main_modules}
		Let $A$ be an arbitrary $C^*$-algebra.
		Then every topologically essential submodule $\N$ of a Hilbert $A$-module $\M$ is essential. 
	\end{corollary}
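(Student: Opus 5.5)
The plan is to chain together three results already established: Theorem~\ref{thm:modules_ideals}, Theorem~\ref{thm:main_ideals}, and Theorem~6.1 of \cite{manuilov}. The whole argument runs through the correspondence $\N \mapsto J_\N$ between closed submodules of $\M$ and closed right ideals of the $C^*$-algebra $\KK(\M)$.

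First I will record that $J_\N = \{T \in \KK(\M) \mid \operatorname{Ran} T \subset \N\}$ is a closed right ideal of $\KK(\M)$. It is a right ideal because $\operatorname{Ran}(TS) \subset \operatorname{Ran} T$. It is closed because $\N$ is closed, so a norm limit $T_k \to T$ with $T_k z \in \N$ for all $z$ gives $Tz \in \N$.

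Next, assume $\N$ is topologically essential in $\M$. The forward direction of Theorem~\ref{thm:modules_ideals} then gives that $J_\N$ is a topologically essential right ideal of $\KK(\M)$. Since $\KK(\M)$ is a $C^*$-algebra, Theorem~\ref{thm:main_ideals} applies with $A$ replaced by $\KK(\M)$, so $J_\N$ is an essential right ideal. Finally, Theorem~6.1 of \cite{manuilov} says that $\N$ is essential if and only if $J_\N$ is, and this yields that $\N$ is essential.

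The only step needing care is checking that the hypotheses of the cited results match. Theorem~\ref{thm:main_ideals} is stated for an arbitrary $C^*$-algebra, so it imposes no condition such as unitality or $\sigma$-unitality on $\KK(\M)$. The cited correspondence from \cite{manuilov} must also be used in the same setting: arbitrary $A$, with $J_\N$ defined exactly as above.

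If one prefers not to rely on the citation, the implication ``$J_\N$ essential $\Rightarrow$ $\N$ essential'' can be proved directly by imitating the second half of the proof of Theorem~\ref{thm:modules_ideals}. Given $m \ne 0$, Lemma~\ref{lem:theta} gives $\Theta_{m,m} \neq 0$, so $\Theta_{m,m}\KK(\M)$ is a nonzero right ideal. Essentiality of $J_\N$ then provides $S$ with $0 \ne \Theta_{m,m}S \in J_\N$. Choose $x$ with $\Theta_{m,m}Sx \ne 0$ and set $a = \inner{m}{Sx}$. Then $ma = \Theta_{m,m}Sx$ is a nonzero element of $\N$, and Lemma~\ref{lem:reformulation}(1) shows that $\N$ is essential.
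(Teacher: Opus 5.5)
Your proposal is correct and follows the paper's route: Theorem~\ref{thm:modules_ideals} shows that $J_\N$ is topologically essential, Theorem~\ref{thm:main_ideals} applied to the $C^*$-algebra $\KK(\M)$ shows that $J_\N$ is essential, and Theorem~6.1 of \cite{manuilov} then shows that $\N$ is essential. Your optional direct proof of the last step is also valid, provided you note that $\Theta_{m,m}\KK(\M)$ is nonzero, which follows e.g.\ from $\Theta_{m,m}^2 \neq 0$ by the $C^*$-identity.
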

	
	\section{Essential submodules of $C_0(X)$-modules}
	\label{sec:fields}
	We now give another, simpler proof of the equivalence of the properties $(E)$ and $(TE)$ for submodules of Hilbert $C^*$-modules over commutative $C^*$-algebras.
	\begin{proposition}
		Let $A$ be a commutative $C^*$-algebra.
		Then every topologically essential submodule $\N$ of a Hilbert $A$-module $\M$ is essential.
	\end{proposition}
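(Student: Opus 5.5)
We propose a direct argument via Lemma~\ref{lem:reformulation}, using commutativity of $A = C_0(X)$ so that the inner product $\inner{m}{m}$ is a nonnegative function on $X$ and elements of $A$ act by pointwise multiplication. Fix a nonzero $m \in \M$. By topological essentiality and Lemma~\ref{lem:reformulation}(2), there is a sequence $a_k \in A$ with $ma_k \to n$ for some nonzero $n \in \N$. The goal is to produce a single $a \in A$ with $ma \in \N \setminus \{0\}$. The idea is to multiply $n$ by a suitable function that "divides out" $\inner mm$ on a region where it is bounded away from zero. That turns $n$ into an exact element of $mA$, while preserving membership in $\N$, because $\N$ is a submodule.

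The first step is to localize. Put $h = \inner mm \in C_0(X)_+$. Since $n \in \overline{mA}$, we will show $\inner nn$ vanishes wherever $h$ vanishes. Indeed, $\inner{ma_k}{ma_k} = |a_k|^2 h$ vanishes on $\{h=0\}$, and the inner product is continuous. Since $n \neq 0$, there is a point $x_0$ with $\inner nn(x_0) > 0$, hence $h(x_0) > 0$. Choose $\eps>0$ with $h(x_0) > 2\eps$, and use Urysohn's lemma to pick $\varphi \in C_c(X)$ supported in $\{h > \eps\}$ with $\varphi(x_0)=1$. Then $n\varphi \in \N$ and $\inner{n\varphi}{n\varphi}(x_0) = \inner nn(x_0) > 0$, so $n\varphi \ne 0$.

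The second step is to show $n\varphi \in mA$. Let $g = \varphi/h$ on $\{h>\eps\}$ and $g=0$ elsewhere; then $g \in C_c(X) \subset A$. We have $n\varphi = \lim_k m a_k \varphi = \lim_k m (h a_k g)$. The key observation is that in a commutative setting the map $b \mapsto m b$ restricted to functions supported in $\{h>\eps\}$ is bounded below: $\|mb\|^2 = \sup_x |b(x)|^2 h(x) \geq \eps \|b\|^2$. Hence the sequence $b_k = a_k\varphi$, which is supported in $\operatorname{supp}\varphi$ and satisfies $mb_k \to n\varphi$, is Cauchy in $A$. It therefore converges to some $b \in A$, and $mb = n\varphi$. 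So $a := b$ gives $ma = n\varphi \in \N \setminus\{0\}$, and Lemma~\ref{lem:reformulation}(1) yields essentiality. With this bounded-below estimate, the auxiliary function $g$ is in fact unnecessary.

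The main obstacle is justifying the bounded-below estimate rigorously. The identity $\inner{mb}{mb} = \bar b\, h\, b = |b|^2 h$ uses commutativity, and the norm in $\M$ is $\|\inner{\cdot}{\cdot}\|_\infty^{1/2}$. On the support of $\varphi$ we have $h \geq \eps$, which gives the inequality. I expect this to go through cleanly. The remaining care needed is the nonvanishing of $n\varphi$ at $x_0$, which follows from $\inner{n\varphi}{n\varphi} = |\varphi|^2\inner nn$.
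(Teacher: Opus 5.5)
Your proof is correct, but it takes a different route from the paper. The paper never uses the Gelfand picture. From $ma_k \to n$ it gets $\inner{n}{m}a_k = \inner{n}{ma_k} \to \inner{n}{n}$. By commutativity, $m\inner{n}{m}a_k = ma_k\inner{n}{m}$, and passing to the limit on both sides gives the exact identity $m\inner{n}{n} = n\inner{n}{m} \in \N$. This element is nonzero because $n\inner{n}{m}a_k = n\inner{n}{ma_k} \to n\inner{n}{n} \ne 0$. So $a' = \inner{n}{n}$ is an explicit multiplier, found without Urysohn's lemma, localization, or any completeness argument.

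Your proof instead realizes $A = C_0(X)$. You cut down by $\varphi$ to a compact region where $\inner{m}{m} \geq \eps$, and use that $b \mapsto mb$ is bounded below on functions supported there. Hence $a_k\varphi$ is Cauchy in $A$, and $n\varphi$ lies exactly in $mA$. This is longer and depends on coordinates. In exchange it makes the mechanism visible: once you cut down to a region where $\inner{m}{m}$ is invertible, $\overline{mA}$ and $mA$ coincide. In that sense it is closer to the spectral-cutoff proof of Theorem~\ref{thm:main_ideals} than to the paper's algebraic identity, which only works in the commutative case.

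Your auxiliary $g = \varphi/\inner{m}{m}$ is indeed superfluous given the Cauchy argument. It would also work directly with the paper's trick: the same commutativity argument gives $n\inner{m}{m} = m\inner{m}{n}$, hence $n\varphi = m\inner{m}{n}g \in mA$.
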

	\begin{proof}
		Let $\N \subset \M$ be a topologically essential submodule, and let $m \in \M$, $m \ne 0$, be arbitrary.
		We show that the submodule $\N \cap mA$ is nontrivial.
		By Lemma~\ref{lem:reformulation}, one can find a sequence $a_k \in A$ and a nonzero $n \in \N$ such that $ma_k \to n$ as $k \to \infty$.
		By continuity of the inner product, we obtain
		\[ \inner{n}{m}a_k = \inner{n}{ma_k} \to \inner{n}{n}. \]
		Since the module action $\M \times A \to \M$ is continuous, we also have
		\[ m\inner{n}{m}a_k \to m\inner{n}{n}. \]
		On the other hand, since $A$ is commutative,
		\[ m\inner{n}{m}a_k = ma_k\inner{n}{m} \to n\inner{n}{m}. \]
		Hence $m\inner{n}{n} = n\inner{n}{m}$. It remains to note that $n' := n\inner{n}{m}$ is a nonzero element of the submodule $\N$, since 
		\[ n'a_k = n\inner{n}{ma_k} \to n\inner{n}{n} \ne 0. \]
		Finally, for $a' = \inner{n}{n}$ we have $ma' = n'$, as required.
	\end{proof}
	
	It is well known (see \cite{takahashi}) that every Hilbert $C^*$-module $\M$ over a commutative $C^*$-algebra $A$ can be identified with the module $C_0(X,\mathcal H)$ of continuous sections of a continuous field of Hilbert spaces $\mathcal H=\{H_x\}_{x\in X}$ over the locally compact Hausdorff space $X=\operatorname{Sp}(A)$.
	Moreover, in the one-point compactification of $X$, the fiber over $\infty$ is the zero space $H_\infty=0$, that is, sections vanish at infinity.
	
	It is also known that every closed submodule $\N \subset C_0(X, \mathcal H)$ is of the form
	\[ \N = \{m \in \M \mid \forall x \in X\; m(x) \in L_x\} = C_0(X, \mathcal L), \]
	where $\mathcal L = \{L_x\}_{x \in X}$ is a continuous field of closed subspaces of $\mathcal H$.
	Namely, 
	\[ L_x = \overline{ \{ n(x) \mid n \in \N \} }. \]
	
	In these terms one obtains the following general criterion for essentiality of a submodule of a $C_0(X)$-module.
	
	\begin{proposition}
		\label{prop:criterion}
		Let $\mathcal H = \{H_x\}_{x \in X}$ be a continuous field of Hilbert spaces over $X$, vanishing at infinity, and let $\M = C_0(X, \mathcal H)$ be the module of its continuous sections.
		Let $\mathcal L = \{L_x\}_{x \in X}$ be the field of subspaces corresponding to a closed submodule $\N \subset \M$.
		Then $\N$ is an essential submodule of $\M$ if and only if for every $m \in \M$ the set
		\[ Y_m = \{ x \in X \mid m(x) \notin L_x \} \]
		is nowhere dense.
	\end{proposition}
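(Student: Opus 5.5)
We use the criterion of Lemma~\ref{lem:reformulation}(1): $\N$ is essential if and only if for every nonzero $m \in \M$ there is $a \in A = C_0(X)$ with $ma \ne 0$ and $ma \in \N$. For $a \in C_0(X)$ we have $(ma)(x) = a(x)m(x)$. Since $L_x$ is a subspace, $(ma)(x) \in L_x$ whenever $a(x) = 0$ or $m(x) \in L_x$. Hence
\[ ma \in \N \iff a = 0 \text{ on } Y_m, \]
and by continuity of $a$ this is equivalent to $a = 0$ on $\overline{Y_m}$. The set $Y_m$ is open, because $x \mapsto \|m(x) - P_x m(x)\|$ is continuous for a continuous field of subspaces. (This uses the standard fact that the projections $P_x$ onto $L_x$ act continuously on sections; we would prove it or cite it along with the description of $\N$.) Therefore "nowhere dense" is the same as "$\overline{Y_m}$ has empty interior", which for an open set means $Y_m = \emptyset$. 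We still phrase the argument with closures, so as not to lean on this openness.

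\emph{Sufficiency.} Suppose every $Y_m$ is nowhere dense, and let $m \ne 0$. The set $U = \{x \mid m(x) \ne 0\}$ is open and nonempty. Since $\overline{Y_m}$ has empty interior, the open set $V = U \setminus \overline{Y_m}$ is nonempty. Pick $x_0 \in V$. By Urysohn's lemma for locally compact Hausdorff spaces, there is $a \in C_c(X)$ with $a(x_0) = 1$ and $\operatorname{supp} a \subset V$. Then $a$ vanishes on $Y_m$, so $ma \in \N$, and $(ma)(x_0) = m(x_0) \ne 0$.

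\emph{Necessity.} Suppose $\N$ is essential, and assume for contradiction that $\overline{Y_m}$ contains a nonempty open set $W$ for some $m$. Then $W \cap Y_m$ is nonempty because $Y_m$ is dense in $W$. Choose $x_0 \in W \cap Y_m$ and $b \in C_c(X)$ with $b(x_0) = 1$ and $\operatorname{supp} b \subset W$. Set $m' = mb$; it is nonzero since $m'(x_0) = m(x_0) \ne 0$. By essentiality there is $a$ with $m'a \in \N$ and $m'a \ne 0$. By the first paragraph, $a$ vanishes on $\overline{Y_{m'}}$. We also have $Y_{m'} = Y_m \cap \{b \ne 0\}$, so $\overline{Y_{m'}} \supset \{b \ne 0\}$, because $Y_m$ is dense in the open set $\{b \ne 0\} \subset W$. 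Hence $ab = 0$ on $\{b \ne 0\}$, and trivially elsewhere, so $m'a = mba = 0$. This is a contradiction.

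The main obstacle is keeping the topological bookkeeping with closures correct, in particular the density step $\{b\ne0\}\subset\overline{Y_{m'}}$. The rest follows directly from the pointwise description of $\N$ given before the proposition.
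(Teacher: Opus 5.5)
Your proof is correct and is essentially the paper's argument: sufficiency via a bump function supported in $\{m\ne0\}\setminus\overline{Y_m}$, and necessity by replacing $m$ with $mb$ for a bump $b$ supported in an open set inside $\overline{Y_m}$, so that $Y_{mb}$ is dense in $\{b\ne0\}$ and any $a$ with $mba\in\N$ forces $ab=0$. You should delete your aside that $Y_m$ is open, because it is false. For $\M=C([0,1])$ and the essential ideal $\N=\{f : f(\tfrac12)=0\}$, the section $m=1$ has $Y_m=\{\tfrac12\}$, since the projections $P_x$ need not vary continuously when $\N$ is not complemented. If $Y_m$ were always open, the criterion would collapse to ``$\N$ essential iff $\N=\M$''. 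Your argument never actually uses the openness, so removing it costs nothing.
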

	\begin{proof}
		Let $m \in \M$ be a nonzero element and assume that $Y_m$ is nowhere dense. Define
		\[
		Z_m = \{x \in X \mid m(x) \ne 0\}.
		\]
		Then $Y_m \subset Z_m$, and $Z_m$ is open. Since $Y_m$ is nowhere dense, the set $Z_m \setminus \overline{Y_m}$
		is a nonempty open subset of $X$.
		Choose a nonzero function $a \in C_0(X)$ such that $\operatorname{supp} a \subset Z_m \setminus \overline{Y_m}$.
		Then $a$ vanishes on $Y_m$, so $ma \in \N$, while $ma \ne 0$. Thus nowhere denseness of all sets $Y_m$ is sufficient for essentiality of the submodule.
		
		Conversely, suppose that for some $m \in \M$ the open set $V = \operatorname{Int}\overline{Y_m}$ is nonempty.
		Take $a \in C_0(X)$ with $\emptyset \ne \operatorname{supp} a \subset V$ and put
		\[
		U = \{x \in X \mid a(x) \ne 0\}.
		\]
		Consider $ma \in \M$. Clearly
		\[
		Z_{ma} = Z_m \cap U \text{ and } Y_{ma} = Y_m \cap U.
		\]
		Since $U \ne \emptyset$ is open and $U \subset \overline{Y_m} \subset \overline{Z_m}$, we obtain
		\[
		\overline{Z_{ma}} = \overline{\overline{Z_m} \cap U} = \overline U = \overline{\overline{Y_m} \cap U} = \overline{Y_{ma}}.
		\]
		Thus $ma \ne 0$ and $Y_{ma}$ is dense in $Z_{ma}$.
		Now let $b \in C_0(X)$ and assume that $mab \in \N$.
		Then $b(x)=0$ for all $x \in Y_{ma}$, hence $b \equiv 0$ on $Z_{ma}$, and therefore $mab=0$.
		Thus $\N$ is not essential.
	\end{proof}
	
	One would like to describe essentiality of a submodule in terms of the set
	\[ Y := \{ x \in X \mid L_x \ne H_x \} = \bigcup_{m \in \M}Y_m. \]
	However, the following example shows that in general this is impossible.
	
	\begin{example}
		Let $X$ be a nonseparable compact space (for instance, $[0,1]^\RR$), and let $H = \ell_2(X)$ be a nonseparable Hilbert space with an orthonormal basis $(e_t)_{t \in X}$ indexed by $X$.
		For $x \in X$, set $H_x = H$ and $L_x = e_x^\bot = \overline{\mathrm{span}}\{e_t \mid t \ne x\}$.
		Then $Y = X$, but $\N = C(X, \mathcal L)$ is essential in $\M = C(X, \mathcal H)$.
	\end{example}
	\begin{proof}
		Observe that since all spaces $H_x$ are equal to the same $H$, the module of sections $C(X, \mathcal H)$ can be identified with the module of all continuous maps $X \to H$.
		Under this identification, the image $m(X)$ of each $m \in \M$ is compact in the Hilbert space $H$ and therefore lies in a separable subspace $\overline{\mathrm{span}}\{e_t \mid t \in J\}$ for some countable set $J$.
		
		Hence for every $x \in X$, in the Fourier expansion $m(x) = \sum_t e_t a_t(x)$, all terms with $t \notin J$ vanish.
		Therefore
		\[
		Y_m = \{x \in X \mid m(x) \notin e_x^\bot\} = \{x \in X \mid a_x(x) \ne 0\} \subset J,
		\]
		so $Y_m$ is countable and hence nowhere dense in $X$.
	\end{proof}
	
	We now show that in the separable case such difficulties do not arise, and that essentiality of a submodule is in fact equivalent to the nowhere denseness of the set $Y$.
	\begin{theorem}
		\label{thm:criterion}
		Let $X$ be a locally compact Hausdorff space, let $H_x$ be a continuous field of Hilbert spaces over $X$, and let $\M$ be the module of its continuous sections vanishing at infinity.
		Assume that $X$ is separable and that $\M$ is countably generated as a Hilbert $C_0(X)$-module.
		Let $\N \subset \M$ be a closed submodule, and let $L_x$ be the corresponding continuous field of closed subspaces.
		Then $\N$ is essential if and only if the set 
		\[Y = \{x \in X \mid L_x \ne H_x\}\]
		is nowhere dense.
	\end{theorem}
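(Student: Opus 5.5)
We reduce everything to Proposition~\ref{prop:criterion}, which says that $\N$ is essential if and only if every set $Y_m$ is nowhere dense. Since $Y=\bigcup_{m\in\M} Y_m$, one direction is immediate: if $Y$ is nowhere dense, then each $Y_m\subset Y$ is nowhere dense, so $\N$ is essential. This direction uses no separability.

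For the converse, the plan is to show that $Y$ is in fact a countable union $Y=\bigcup_{k} Y_{m_k}$, where $\{m_k\}$ is a countable generating set of $\M$. Indeed, the closed span of $\{m_k a\}$ is $\M$, so for each $x$ the vectors $m_k(x)$ span a dense subspace of $H_x$. Here we use that every $h\in H_x$ is a value $m(x)$, which follows from the standard fact that the evaluations $\{m(x)\}$ are dense in $H_x$, together with norm continuity of evaluation. Now fix $x\in Y$, so that $L_x\ne H_x$. If all $m_k(x)$ lay in the closed subspace $L_x$, then $H_x=L_x$, a contradiction. Hence $x\in Y_{m_k}$ for some $k$, and therefore $Y=\bigcup_k Y_{m_k}$.

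It remains to show that this countable union of nowhere dense sets is nowhere dense. Countable unions of nowhere dense sets need not be nowhere dense, so this is the main obstacle, and it is exactly where separability of $X$ and the structure of the sets $Y_m$ must be used. The natural route is to show that $Y$ is not just meagre but has a controlled form. For each $m$, let $P_x$ denote the projection onto $L_x$. The function $x\mapsto \|m(x)-P_x m(x)\|$ is lower semicontinuous, being a supremum of $|\langle m(x), e(x)\rangle|$ over sections $e\in\M$ orthogonal to $\N$ with $\|e\|\le 1$, or alternatively computed via $\N$. Hence $Y_m$ is open (an $F_\sigma$-type description also works), and so $Y$ is open.

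An open nowhere dense set is empty, so the essential case should force each $Y_{m_k}$ to be empty, giving $Y=\emptyset$. If openness of $Y_m$ fails in general, because $L_x$ only varies lower semicontinuously, the fallback is a Baire argument. Take a countable dense set $\{x_j\}$ in $X$ and suppose an open $V\subset\overline Y$ is nonempty. Then construct, by a diagonal sum $m=\sum_k 2^{-k} m_k a_k/\|m_k\|$ with suitable cutoffs $a_k$, a single section $m$ with $Y_m$ dense in some open set, contradicting Proposition~\ref{prop:criterion}. The delicate point in this construction is ensuring that the combination does not cancel: $m(x)\notin L_x$ at points of a dense subset of $V$. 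We would handle this by choosing coefficients inductively along the countable dense set, using that avoiding membership in $L_{x_j}$ is an open condition on the coefficients at each stage.
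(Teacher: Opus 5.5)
Your easy direction and the decomposition $Y=\bigcup_k Y_{m_k}$ over a countable generating set agree with the paper. Your primary route for the converse is wrong, however. The function $x\mapsto\operatorname{dist}(m(x),L_x)=\inf_{n\in\N}\|m(x)-n(x)\|$ is an infimum of continuous functions, so it is \emph{upper} semicontinuous, not lower. Hence $Y_m$ is in general only an $F_\sigma$ set, not open. Your supremum formula over sections $e\perp\N$ fails because such sections need not fill out $L_x^\perp$. In fact $\N^\perp=0$ whenever $\N$ is essential: for $0\neq e\in\N^\perp$ one gets $eA\cap\N\subseteq\N^\perp\cap\N=0$. Your conclusion that essentiality forces $Y=\emptyset$ would give $\N=\M$ for every essential $\N$. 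This already fails for the essential ideal $C_0((0,1])\subset C([0,1])$, where $Y=\{0\}$.

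Your fallback is the paper's argument in outline, but it has two defects. First, the sum should be indexed by points, not generators. For each $x_j$ take $g_{k_j}$ with $g_{k_j}(x_j)\notin L_{x_j}$, and a cutoff $a_j$ with $a_j(x_j)=1$ and $a_j(x_i)=0$ for $i<j$. Then $m(x_j)$ is a finite sum, and $\lambda_j$ only has to avoid one bad value in $H_{x_j}/L_{x_j}$. Second, and decisively, the points must lie in $Y$, since at $x\notin Y$ one has $L_x=H_x$. So a countable dense subset of $X$ is useless; you need a countable subset of $V\cap Y$ that is dense in $V$. The paper obtains this ``by separability of $X$''. But $V\cap Y$ is not open and separability is not hereditary, so this does not follow. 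In fact this step cannot be repaired: the statement is false under the given hypotheses.

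Here is a counterexample. Let $X=\{0,1\}^I$ with $|I|=\mathfrak c$, viewed as the space of subsets $s\subseteq I$; it is compact and separable (Hewitt--Marczewski--Pondiczery). Let $\M=C(X,\ell^2)$, the standard module $\ell^2(C(X))$, which is countably generated. A section is determined by its values on a countable dense set, so $|\M|\le\mathfrak c$. Hence there is a map $\phi\colon I\to\M$ all of whose fibres are infinite. For each finite $s$ pick a unit vector $\xi(s)$ orthogonal to the finitely many vectors $\phi(t)(s)$, $t\in s$. Set $\N=\{n\in\M : n(s)\perp\xi(s)\text{ for all finite } s\}$.

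Take $m\in\M$ and a nonempty basic open set $\{s : F\subseteq s,\ s\cap G=\emptyset\}$. Choose $t\in\phi^{-1}(m)\setminus(F\cup G)$. On the nonempty basic subset where also $t\in s$, every finite $s$ satisfies $m(s)\perp\xi(s)$. Thus $E_m=\{s\text{ finite}:\inner{m(s)}{\xi(s)}\neq0\}$ is nowhere dense. For $m\neq0$, any nonzero $a\in C(X)$ supported in $\{m\neq0\}\setminus\overline{E_m}$ gives $0\neq ma\in\N$, so $\N$ is essential. Yet $L_s\subseteq\xi(s)^\perp\neq\ell^2$ for every finite $s$, and finite sets are dense in $X$, so $Y$ is dense. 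Consistently, no countable subset of $Y$ is dense in any open set.

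Your argument, and the paper's, does go through under an extra hypothesis such as first countability of $X$. Take a countable dense subset of $V$. For each of its points $d$, pick one point of $Y$ in each member of a countable neighbourhood base at $d$ inside $V$. These countably many points of $Y$ are then dense in $V$.
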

	\begin{proof}
		If $Y$ is nowhere dense, then for every $m \in \M$ we have $Y_m \subset Y$, hence $Y_m$ is nowhere dense, and therefore $\N$ is essential by Proposition~\ref{prop:criterion}.
		
		Conversely, assume that $\N$ is essential, but $Y$ is not nowhere dense. Then some nonempty open set $U$ is contained in $\overline Y$.
		Hence $\overline{U \cap Y} = \overline{U \cap \overline Y} = \overline U$, and by separability of $X$ one can choose a countable set $\{x_j\}_{j \in \NN} \subset U \cap Y$ dense in $U$.
		
		Take a countable generating set $\{g_k\}_{k \in \NN} \subset \M$ with $\|g_k\| = 1$ for all natural $k$.
		For convenience, write $Y_k = Y_{g_k},\; H_j = H_{x_j}$ and $L_j = L_{x_j} \subset H_j$.
		Note that
		\begin{equation}
			\label{eq:union}
			Y = \bigcup_{k=1}^\infty Y_k.
		\end{equation}
		Indeed, if $x \in Y_k$ for some $k$, then clearly $L_x \ne H_x$, so $x \in Y$.
		Conversely, if for all $k \in \NN$ one has $g_k(x) \in L_x$, then for every $m \in \M$ one also has $m(x) \in L_x$.
		But the set $\{m(x) \mid m \in \M\}$ is dense in $H_x$, hence $L_x = H_x$ and therefore $x \notin Y$.
		
		Now fix $j \in \NN$. Since $x_j \in Y$, it follows from \eqref{eq:union} that one can choose an index $k_j$ such that $x_j \in Y_{k_j}$, that is, $g_{k_j}(x_j) \notin L_j$.
		Since $X$ is completely regular, there exist functions $\{a_j\}_{j \in \NN} \subset C_0(X)$ with $0 \leq a_j \leq 1$, $a_j(x_j) = 1$, and $a_j(x_i) = 0$ for all $i < j$.
		Construct inductively a sequence of positive numbers $0 < \lambda_j \leq 2^{-j}$ such that
		\begin{equation}
			\label{eq:induction}
			\sum_{i=1}^j\lambda_ia_i(x_j)g_{k_i}(x_j) = \sum_{i=1}^{j-1}\lambda_ia_i(x_j)g_{k_i}(x_j) + \lambda_jg_{k_j}(x_j) \notin L_j. 
		\end{equation}
		This is possible because the image of the vector $g_{k_j}(x_j)$ in the quotient $H_j / L_j$ is nonzero.
		
		Now put
		\[ m = \sum_{j=1}^\infty \lambda_j g_{k_j}a_j. \]
		Then $m \in \M$, since the norm of the $j$-th term does not exceed $2^{-j}$, and hence the series converges uniformly.
		By construction, from~\eqref{eq:induction} it follows that for every natural $j$,
		\[ m(x_j) = \sum_{i=1}^j\lambda_ig_{k_i}(x_j)a_i(x_j) \notin L_j. \]
		Thus all $x_j$ belong to $Y_m$, and therefore $U \subset \overline{Y_m}$.
		Hence the set $Y_m$ is not nowhere dense, and the submodule $\N$ is not essential, a contradiction.
	\end{proof}
	
	\begin{remark}    
		Every continuous field of Hilbert spaces $\{H_x\}_{x \in X}$ gives rise to a continuous field of $C^*$-algebras $\{\KK(H_x)\}_{x \in X}$ consisting of compact operators on the fibers.
		It is easy to verify that the $C^*$-algebra $C_0(X, \KK(H_x))$ of continuous sections of this field coincides with $\KK(C_0(X, H_x))$, the algebra of compact operators on $\M = C_0(X, \mathcal H)$.
		As was noted above, all closed right ideals of $\KK(\M)$ are of the form
		\[ J = \{T \in \KK(\M) \mid \operatorname{Ran} T_x \subset L_x\} \]
		for some field of closed subspaces $L_x \subset H_x$.
		Then, taking into account the results of Section~\ref{sec:modules}, criteria analogous to~\ref{prop:criterion} and~\ref{thm:criterion} are also valid for closed right ideals of the $C^*$-algebra of compact operators on the module $\M$.
	\end{remark}

\end{document}